\newtheorem{theorem}{Theorem}[subsection]
\newtheorem{claim}[theorem]{Claim}
\newtheorem{corollary}[theorem]{Corollary}
\newtheorem{problem}[theorem]{Problem}
\newtheorem{definition}{Definition}
\newcommand{\rr}{\mathds{R}}
\DeclareMathOperator{\conv}{conv}
\title{Tverberg's theorem, disks, and Hamiltonian cycles}
\author[Sober\'on]{Pablo Sober\'on}\address{Baruch College, City University of New York, One Bernard Baruch Way, New York, NY 10010} 
\email{pablo.soberon-bravo@baruch.cuny.edu}
\author[Tang]{Yaqian Tang}
\address{Wesleyan University, Middletown, CT 06459}
\email{ytang@wesleyan.edu}
\thanks{This research project was done as part of the 2020 Baruch Discrete Mathematics REU, supported by NSF awards DMS-1802059, DMS-1851420, and DMS-1953141.  Sober\'on's research is also supported by PSC-CUNY grant 63529-00 51.  Tang's research was supported by Wesleyan University's Sumer Science Research Endowed Fund.}
\begin{document}

\begin{abstract}
For a finite set $S$ of points in the plane and a graph with vertices on $S$ consider the disks with diameters induced by the edges.  We show that for any odd set $S$ there exists a Hamiltonian cycle for which these disks share a point, and for an even set $S$ there exists a Hamiltonian path with the same property.  We discuss high-dimensional versions of these theorems and their relation to other results in discrete geometry.
\end{abstract}

\maketitle

\section{Introduction}

In 1966, Helge Tverberg proved that \textit{for any set of $(r-1)(d+1)+1$ points in $\rr^d$ there exists a partition of them into $r$ parts whose convex hulls intersect} \cite{Tverberg:1966tb}.  We call these partitions \emph{Tverberg partitions}.  Among the variations and generalizations of Tverberg's theorem, two kinds stand out.  In the first, we impose additional conditions on the partitions or try to deduce structural properties of the family of all Tverberg partitions of a set.  The colorful versions of Tverberg's theorem, or Sierksma's conjecture on the number of Tverberg partition fall into this category.  In the second, we relax or modify the geometric conditions while not breaking the existence of Tverberg's partitions.  The topological versions of Tverberg's theorem are an example of such extensions.  We recommend \cites{Barany:2016vx, Barany:2018fya, DeLoera:2019jb} and the references therein for the developments around Tverberg's theorem.

This manuscript focuses on a variation of the second kind.  For a segment $e$ in $\rr^d$ with endpoints $x,y$, we denote by $D(e)$ or $D(x,y)$ the closed ball for which $e$ is a diameter.  Given a finite set of points in $\rr^d$, instead of looking at the the convex hulls of its subsets, we are interested in the balls spanned by pairs of its points.  For any graph $G$ with vertices in $\rr^d$, we consider a rectilinear drawing of $G$, where each segment is represented by a straight segment.

\begin{definition}\label{def:tverberg-graph}
	Let $S$ be a finite set of points in $\rr^d$.  Let $G$ be a graph whose vertex set is $S$ and whose edge set is $E$.  We say that $G$ is a \emph{Tvereberg graph} for $S$ if
	\[
	\bigcap_{e\in E}D(e) \neq \emptyset.
	\]
\end{definition}

\begin{figure}
\centerline{\includegraphics[width = \textwidth]{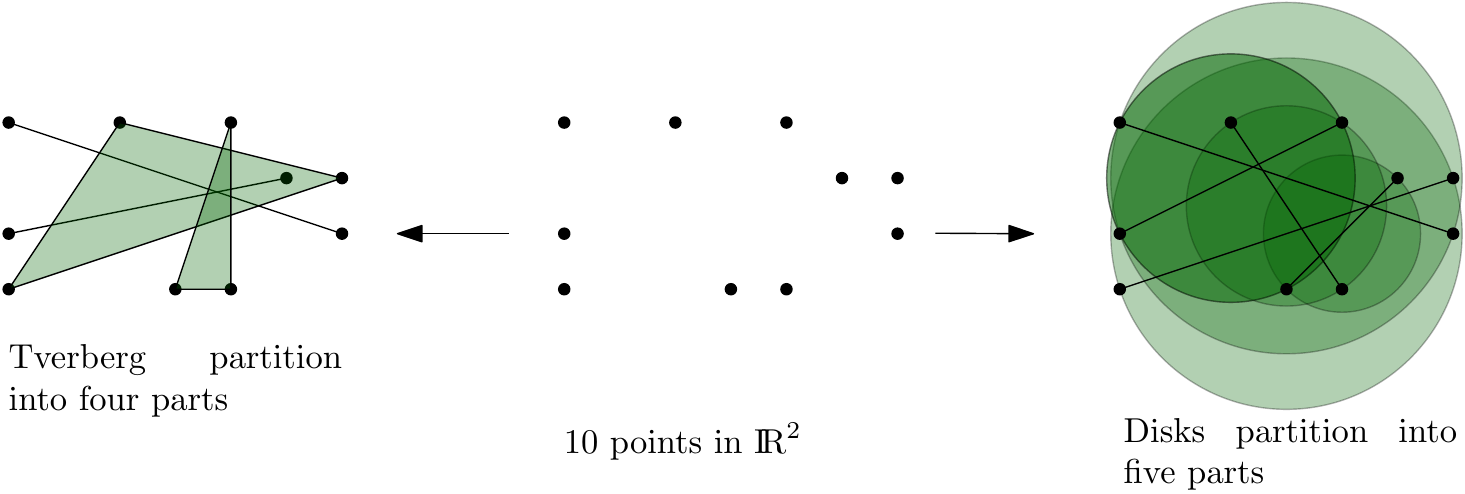}}	
\caption{Two different partitions for the same set of points.}
\end{figure}

A natural way to extend Tverberg's theorem to disks is to ask if for any $2r$ points in $\rr^d$ there exists a perfect matching that is a Tverberg graph.  In the plane, Huemer, P\'erez-Lantero, Seara, and Silveira proved said result \cite{Huemer:2019ji}.  They even showed that for any $r$ blue point and any $r$ red point there is a perfect red-blue matching that is a Tverberg graph.  This extends the colorful Tverberg theorem to disks.  Bereg, Chac\'on-Rivera, Flores-Pe\~naloza, Huemer, and P\'erez-Lantero found a second proof of the monochromatic version \cite{bereg2019maximum}.  Definition \ref{def:tverberg-graph} leads us to the following problem.

\begin{problem}\label{prob:main-problem}
	Given a finite set of points $S$ in $\rr^d$, determine the family of Tverberg graphs for $S$.
\end{problem}

There is no need for the graphs from Problem \ref{prob:main-problem} to be matchings.  Once $S$ is fixed, Tverberg graphs are closed under containment, so finding the containment-maximal Tverberg graphs is of interest.  In this manuscript, we show that the family of Tverberg graphs for a set of points always contains other interesting graphs.  We are particularly interested in Hamiltonian cycles and Hamiltonian paths.

\begin{theorem}\label{thm-odd}
	Let $S$ be a finite set of points in the plane.  If $S$ has odd cardinality there exists a Hamiltonian cycle that is a Tverberg graph for $S$.  If $S$ has even cardinality, there exists a Hamiltonian path that is a Tverberg graph for $S$.
\end{theorem}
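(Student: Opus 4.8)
The plan is to argue by a combination of a continuity/topological argument and a discrete optimization argument, and to reduce the odd case and the even case to one another. I will first treat the even case, $|S| = 2n$. Consider all perfect matchings on $S$; among these, pick one, call it $M$, that minimizes the total squared edge length $\sum_{xy \in M} \|x-y\|^2$. The key observation is a local-exchange property: if $xy$ and $zw$ are two edges of $M$, then swapping to $xz, yw$ (or $xw, yz$) cannot decrease the objective, and expanding $\|x-w\|^2 + \|z-y\|^2 \ge \|x-y\|^2 + \|z-w\|^2$ after cancellation gives $\langle x-z, y-w\rangle \le 0$ for an appropriate pairing. I expect this to force, for the point $c$ realizing a suitable ``balance'' condition, that $c$ lies in every disk $D(xy)$, $xy \in M$: the condition $p \in D(x,y)$ is exactly $\langle p - x, p - y \rangle \le 0$. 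So the optimal matching is already a Tverberg graph. This recovers the Huemer--P\'erez-Lantero--Seara--Silveira result; I will lean on that result directly if possible rather than reprove it.

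Next I would promote a Tverberg matching to a Tverberg path or cycle. Here is the crucial geometric lemma I would isolate: if $D(x,y)$ and $D(z,w)$ share a common point $p$, then the ``reconnected'' disks also share a point; more precisely, I want that one can always join two edges whose disks meet through a short intermediate structure without losing the common intersection point. The cleanest version: if $p \in D(a,b)$ and $p \in D(b,c)$ for a common vertex $b$, that is automatic once we route a path through $b$. So the strategy for the even case is: take the Tverberg matching $M$ with common point $p$, and connect its $n$ edges into a single Hamiltonian path by adding $n-1$ connector edges $D(\cdot,\cdot)$; I must choose these connectors so that $p$ still lies in each new disk. That is where the main work lies — the added disks are \emph{not} free.

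To handle the connectors, I would re-run the optimization at the level of paths rather than matchings: among all Hamiltonian paths on $S$ (for $|S|$ even) minimize $\sum_{e} \|e\|^2$, and among all Hamiltonian cycles on $S$ (for $|S|$ odd) do the same. Then apply the same local-exchange / 2-opt argument: given an optimal Hamiltonian path $v_1 v_2 \cdots v_{2n}$, for any $i < j$ the reversal move replacing the sub-path $v_i \cdots v_j$ by its reverse changes the objective by $\|v_{i-1} - v_j\|^2 + \|v_i - v_{j+1}\|^2 - \|v_{i-1}-v_i\|^2 - \|v_j - v_{j+1}\|^2 \ge 0$, which after expansion yields an inequality of the form $\langle v_{i-1} - v_{j+1}, \, v_i - v_j \rangle \le 0$. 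The hope (and the heart of the proof) is that these inequalities, ranging over all $i,j$, certify that a single well-chosen point — I would try the point minimizing $\max_e$ of the signed quantity, or a point produced by an averaging/centroid argument over the edges — lies in every $D(v_k, v_{k+1})$. I expect verifying that a \emph{single} common point works for all $2n-1$ (resp.\ $2n$ in the cycle case) disks simultaneously to be the main obstacle: the 2-opt inequalities are naturally ``pairwise,'' and upgrading pairwise disk-intersection to a common point requires either a Helly-type argument in the plane (disks are convex, so Helly's theorem reduces it to checking triples) or an explicit identification of the witness point.

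Finally I would connect the parity cases: given $S$ of odd size, delete a point $x$ to get an even set $S' = S \setminus \{x\}$, obtain a Tverberg Hamiltonian path $v_1 \cdots v_{2n}$ on $S'$ with common point $p$, and close it up through $x$ by adding edges $v_{2n} x$ and $x v_1$; the task is to choose $x$ (or re-optimize) so that $p \in D(v_{2n}, x) \cap D(x, v_1)$, i.e.\ so that $x$ sees $p$ at an obtuse or right angle from both path-endpoints — equivalently $x \in D(p - (v_1 - p)) \cap \dots$, a condition on where $x$ can be. Dually, from an odd Tverberg cycle one deletes an edge to get the even path. I anticipate the odd$\to$even reduction to be the more delicate direction and would instead prefer to run the cycle optimization intrinsically (minimize $\sum \|e\|^2$ over Hamiltonian cycles on the odd set) and make the 2-opt argument close up using oddness: the parity of $|S|$ is exactly what lets a cyclic sequence of the pairwise obtuse-angle conditions be consistent, which is the structural reason the cycle works for odd $S$ but only a path works for even $S$.
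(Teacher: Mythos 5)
Your proposal leaves the central step unproved, and you acknowledge this yourself. Two concrete problems. First, the opening step is false as stated: the perfect matching that \emph{minimizes} $\sum_{xy\in M}\|x-y\|^2$ is not in general a Tverberg graph — take two far-apart tight clusters of two points each; the minimizing matching pairs the points within each cluster and the two resulting disks are tiny and disjoint. The Huemer et al.\ and Bereg et al.\ results you want to lean on concern the matching \emph{maximizing} the total (squared) length, and their proofs are not a formality: they reduce to a bounded-size configuration via Helly, a reduction that (as discussed in the paper) has no analogue for Hamiltonian cycles because being a Hamiltonian cycle is not a local property. Second, and more seriously, the 2-opt inequalities $\langle v_{i-1}-v_{j+1},\,v_i-v_j\rangle\le 0$ extracted from an extremal tour compare tour edges against chords that are \emph{not} edges of the tour; they do not exhibit a point lying in all the disks $D(v_k,v_{k+1})$, and they do not even show that every \emph{three} such disks intersect, which is what you would need before Helly's theorem could finish the job. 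The claim that ``the parity of $|S|$ is exactly what lets a cyclic sequence of the pairwise obtuse-angle conditions be consistent'' is an assertion of the theorem, not an argument. So the witness point — the whole content of the statement — is never produced.

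The paper resolves exactly this difficulty by reversing your order of quantifiers: it chooses the candidate intersection point $p$ \emph{first} and then builds a canonical Hamiltonian cycle around $p$, by radially projecting $S$ onto a circle centered at $p$, labeling the projections $x_1,\ldots,x_{2n+1}$ cyclically, and joining $x_i$ to $x_{i+n}$ and $x_{i+n+1}$; oddness of $|S|=2n+1$ is what makes every edge nearly antipodal through $p$. It then runs a Birch-style continuous optimization over the location of $p$: let $\ell(p)$ be the number of disks of this canonical cycle missing $p$, take the smallest $k$ with $\{\ell\le k\}$ nonempty, and on that compact set maximize the sum of the deficient angles; a Helly argument applied to the ``short'' arcs on the circle (again using that each such arc contains at least $n+1$ of the $2n+1$ projections) produces a direction in which perturbing $p$ strictly increases the objective unless $k=0$. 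Finally, the even case is obtained from the odd case by \emph{adding} an auxiliary point and deleting it from the resulting cycle — the opposite, and much cleaner, direction from the delete-a-point-and-close-up reduction you sketch, which would require extra unverified angle conditions at the reinserted vertex.
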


The planar result by Huemer et al. and by Bereg et al. rely on a particular choice of a perfect matching and then a reduction of the problem with $2r$ points to a problem with six points via Helly's theorem.  Such a reduction does not work for Hamiltonian cycles or paths, since the property of being a Hamiltonian cycle cannot be verified locally.  Our methods rely instead on choosing the point of intersection of the disks and then constructing the cycle.

We note that the essence of Theorem \ref{thm-odd} is the existence of Hamiltonian cycles when $|S|$ is odd.  This result implies the existence of the Hamiltonian path for $|S|$ even, as we show below.

\begin{corollary}
Let $S$ be a set of $2r$ points in $\rr^2$.  There exists a Hamiltonian path that is a Tverberg graph for $S$.
\end{corollary}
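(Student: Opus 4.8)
The plan is to deduce the corollary from the odd case of Theorem~\ref{thm-odd} by introducing one auxiliary point. Given $S$ with $|S|=2r$, choose any point $p\in\rr^2\setminus S$ and set $S'=S\cup\{p\}$, so that $|S'|=2r+1$ is odd. By Theorem~\ref{thm-odd} there is a Hamiltonian cycle $C$ on $S'$ that is a Tverberg graph; fix a point $q$ in $\bigcap_{e\in E(C)}D(e)$.

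In $C$ the vertex $p$ has exactly two incident edges, say $pa$ and $pb$. Delete $p$ from $C$ (equivalently, delete these two edges). The resulting graph $P$ has vertex set $S$ and its edge set consists precisely of the edges of $C$ not incident to $p$; since $C$ is a cycle visiting every vertex of $S'$, $P$ is a Hamiltonian path on $S$ with endpoints $a$ and $b$.

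Finally, since every edge of $P$ is also an edge of $C$, we have $\bigcap_{e\in E(P)}D(e)\supseteq\bigcap_{e\in E(C)}D(e)\ni q$, so this intersection is nonempty and $P$ is a Tverberg graph for $S$. The argument uses only the odd case of Theorem~\ref{thm-odd} together with the observation (already noted above) that Tverberg graphs are closed under deletion of edges, so there is no real obstacle here; the single point requiring care is choosing $p$ outside $S$ so that $S'$ genuinely consists of $2r+1$ distinct points, which is always possible in the plane.
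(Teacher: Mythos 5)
Your proposal is correct and matches the paper's argument exactly: append an auxiliary point to make the set odd, apply the Hamiltonian-cycle case of Theorem~\ref{thm-odd}, and delete the added vertex with its two edges, noting that Tverberg graphs are closed under edge deletion. Nothing further is needed.
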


\begin{proof}
       We append an additional point $x$ to $S$.  By the first part of Theorem \ref{thm-odd}, there exists a Hamiltonian cycle that is a Tverberg graph for $S\cup \{x\}$.  By removing $x$ and its two edges, we are left with a Hamiltonian path on $S$, as desired.
\end{proof}

A Hamiltonian path contains a perfect matching, so this corollary implies the most simple version of ``Tverberg for disks''.  The trick of appending an additional point is quite useful in other Tverberg-type problems \cite{Blagojevic:2014js, Blagojevic:2011vh, Blagojevic:2015wya, Frick:2020uj}.  If the appended point $x$ is a vertex of $S$, then we get a slightly stronger result.  There is a Tverberg graph on $S$ which is the union of two cycles sharing exactly one vertex.  The union of the two cycles contains every point in $S$, and the shared vertex may be fixed in advance.

The reason our results work in dimension two is because we use an idea similar to Birch's proof of Tverberg's theorem in the plane \cite{Birch:1959ii}.  Other Tverberg-type theorems are much better understood in the plane \cite{Barany:1992tx}

In high dimensions we obtain the following results, which provides a clear connection between Tverberg graphs (for disks) and Tverberg's original theorem.

\begin{theorem}\label{thm-tverbergapplied}
    Let $S$ be a finite set of points in $\rr^d$ such that $|S| \ge (r-1)(d+1)+1$.  There exists a partition of $S$ into $r$ non-empty sets $A_1,\ldots, A_r$ and a Tverberg graph with vertices of $S$ such that every point of $S$ is adjacent to at least one point of each $A_i$. 
\end{theorem}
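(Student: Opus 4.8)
The plan is to derive this directly from Tverberg's theorem, using the common point of the convex hulls of the Tverberg parts as the single point that witnesses that the graph we build is a Tverberg graph. Since $|S|\ge (r-1)(d+1)+1$, Tverberg's theorem furnishes a partition $S=A_1\cup\cdots\cup A_r$ into non-empty sets together with a point $p\in\bigcap_{i=1}^{r}\conv(A_i)$. From now on $p$ is fixed, and the whole argument rests on one elementary remark about a single disk: for $x,y,q\in\rr^d$ we have $q\in D(x,y)$ if and only if $(x-q)\cdot(y-q)\le 0$, i.e., the segment $xy$ subtends a non-acute angle at $q$; this is just the expansion of $\lvert q-\tfrac{x+y}{2}\rvert^{2}\le\tfrac14\lvert x-y\rvert^{2}$.

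The key step is the following claim: if $p\in\conv(A)$ for a finite set $A\subset\rr^d$, then for every $v\in\rr^d$ there is a point $a\in A$ with $(v-p)\cdot(a-p)\le 0$, equivalently $p\in D(v,a)$. To see this, write $p=\sum_{a\in A}\lambda_a a$ with $\lambda_a\ge 0$ and $\sum_a\lambda_a=1$; then
\[
\sum_{a\in A}\lambda_a\,(v-p)\cdot(a-p)\;=\;(v-p)\cdot\Big(\sum_{a\in A}\lambda_a (a-p)\Big)\;=\;0 ,
\]
so the numbers $(v-p)\cdot(a-p)$ are not all positive and we may choose $a$ realizing a non-positive value. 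Such an $a$ can moreover be taken different from $v$ except when $v=p$ and $A=\{p\}$: if $v=p$ every value equals $0$ and any $a\in A\setminus\{v\}$ works, whereas if $v\ne p$ the value at $a=v$ equals $\lvert v-p\rvert^{2}>0$, so a non-positive value must occur at some $a\ne v$.

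Now I would build $G$ on the vertex set $S$ as follows: for every $v\in S$ and every index $i\in\{1,\dots,r\}$, apply the claim with $A=A_i$ to obtain $a_i(v)\in A_i$ with $a_i(v)\ne v$ and $p\in D(v,a_i(v))$, and insert the edge $\{v,a_i(v)\}$ into $G$. Every edge $e$ of $G$ satisfies $p\in D(e)$, hence $p\in\bigcap_{e\in E(G)}D(e)$ and $G$ is a Tverberg graph for $S$; and by construction each $v\in S$ is adjacent to a vertex of each part $A_i$.

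The one point that needs care is the degenerate case in which the Tverberg point $p$ happens to be a point of $S$ forming a singleton part, say $A_i=\{p\}$: then $v=p$ cannot be joined to a vertex of $A_i$, but it already lies in $A_i$, so the conclusion still holds once "adjacent to a point of $A_i$" is understood to be met automatically by a vertex that belongs to $A_i$ (in all other cases the edge $\{v,a_i(v)\}$ is genuine). I expect this bookkeeping about whether $p$ lies in $S$, and in which part, to be the only real obstacle; once it is dispatched, the statement is an immediate consequence of Tverberg's theorem and the single-disk inequality above.
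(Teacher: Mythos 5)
Your proposal is correct and follows essentially the same route as the paper: take a Tverberg partition with common point $p$, and for each vertex $v$ and each part $A_i$ use $p\in\conv(A_i)$ to find $a\in A_i$ with $\langle v-p,\,a-p\rangle\le 0$, so that $p\in D(v,a)$; the paper phrases this via a half-space through $p$ rather than your convex-combination averaging, but the two are the same observation. Your explicit handling of the degenerate case $a=v$ (and of a singleton part equal to $\{p\}$) is a point the paper glosses over, so that extra care is welcome.
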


In particular, this result shows that every finite set in $\rr^d$ has Tverberg graphs which are dense.

\begin{corollary}
	Let $S$ be a finite set of points in $\rr^d$.  There is a Tverberg graph for $S$ whose minimum degree is greater than or equal to $|S|/(d+1)$.
\end{corollary}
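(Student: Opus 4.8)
The plan is simply to invoke Theorem~\ref{thm-tverbergapplied} with the largest value of $r$ that its hypothesis allows, and then read off the minimum degree from the partition it produces. Write $n = |S|$. The hypothesis $n \ge (r-1)(d+1)+1$ is equivalent to $r-1 \le (n-1)/(d+1)$, i.e.\ $r \le 1 + (n-1)/(d+1) = (n+d)/(d+1)$, so the largest admissible integer is
\[
r^{*} \;=\; \left\lfloor \frac{n+d}{d+1} \right\rfloor \;=\; \left\lceil \frac{n}{d+1} \right\rceil .
\]
(One may assume $n \ge 2$, so that $r^{*}\ge 1$ and the conclusion is not vacuous.)

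Applying Theorem~\ref{thm-tverbergapplied} with this $r = r^{*}$ yields a partition of $S$ into non-empty blocks $A_1,\dots,A_{r^{*}}$ together with a Tverberg graph $G$ for $S$ in which every vertex $v\in S$ is adjacent to at least one point of each $A_i$. Since the blocks $A_1,\dots,A_{r^{*}}$ are pairwise disjoint, a neighbour of $v$ lying in $A_i$ is distinct from a neighbour of $v$ lying in $A_j$ whenever $i\ne j$; hence $v$ has at least $r^{*}$ distinct neighbours in $G$. Therefore the minimum degree of $G$ is at least $r^{*}$.

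It remains only to note that $r^{*} = \lceil n/(d+1)\rceil \ge n/(d+1) = |S|/(d+1)$, which gives the claimed bound on the minimum degree of $G$.

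\textbf{Expected main obstacle.} There is essentially no obstacle here: Theorem~\ref{thm-tverbergapplied} does all the geometric work, and the only things to verify are the elementary arithmetic that the optimal choice of $r$ equals $\lceil |S|/(d+1)\rceil$, and the combinatorial observation that adjacency into each of the disjoint blocks forces the degree to be at least the number of blocks. The one point worth a moment's care is the small-cardinality regime (so that the partition has at least one block and, if one wants the degree bound to be meaningful, that $G$ actually carries edges), but this is handled by assuming $|S|\ge 2$, and even a weaker reading of the statement suffices there.
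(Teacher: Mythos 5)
Your argument is correct and is exactly the intended derivation: the paper states this corollary as an immediate consequence of Theorem~\ref{thm-tverbergapplied} without writing out a proof, and the proof is precisely your observation that taking the largest admissible $r=\lceil |S|/(d+1)\rceil$ and using the disjointness of the blocks $A_1,\dots,A_r$ forces every vertex to have at least $r$ distinct neighbours. Your arithmetic identifying the optimal $r$ and your remark about the degenerate small-cardinality case are both fine.
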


We set notation and preliminaries in Section \ref{sec:notation} and prove Theorem \ref{thm-odd} in Section \ref{sec:main-proofs}.  Finally, we prove Theorem \ref{thm-tverbergapplied} in Section \ref{sec:high-dimensions}.  We present open problems throughout the manuscript.

\section{Notation and general position assumptions}\label{sec:notation}

In our proofs, we assume that our set of points $S$ is in general position.  In this paper, we say that a finte set $S$ in $\rr^2$ is in general position if:
\begin{itemize}
    \item no three points of $S$ are collinear,
    \item for any $x,y,z \in S$, we know $z$ is not in the boundary of $D(x,y)$ if $z \not\in\{x,y\}$, and
    \item for any three pairs $\{x_1, y_1\}, \{x_2, y_2\}, \{x_3, y_3\}$ of points of $S$ the boundaries of $D(x_1, y_1)$, $D(x_2, y_2)$, and $D(x_3,y_3)$ do not intersect unless $\bigcap_{i=1}^3 \{x_i,y_i\} \neq \emptyset$.
    \item If $\{x_1,y_1\}, \{x_2,y_2\}$ are two pairs of points of $S$, then the boundaries of $D(x_1, y_1), D(x_2,y_2)$ are not tangent.
\end{itemize}

It is clear that any $n$-tuple of points $S$ can be approximated by a sequence $\{S_i\}$ of $n$-tuples of points in general position.  If we can find a particular Tverberg graph $G_i$ for each $S_i$, a subsequence of those graphs will determine the same adjacent pairs.  Therefore, the corresponding limiting graph $G$ will be a Tverberg graph for $S$.  If each $G_i$ satisfies an additional property (such as being regular, a Hamiltonian cycle or satisfying a bound on its minimum degree), so will $G$.

Given a graph $G$, we denote by $E(G)$ its edges and by $V(G)$ its vertices.

\section{An odd number of points in the plane}\label{sec:main-proofs}

The particular case when $S$ is in convex position showcases some of the obstacles in solving this problem.  For two points $x, y \in \rr^2$ and $\alpha \in (0,\pi)$ we define the $\alpha$-lens $\alpha(x,y)$ as 
\[
\alpha(x,y) =\{z \in \rr^d : \angle xzy \ge \alpha \}.
\]

If $\alpha = \pi/2$, then $\alpha(x,y) = D(x,y)$.  Alpha-lenses have interesting intersection porperties \cite{barany1987extension, Barany:1987ef, Magazinov:2016ia}.  If $e$ denotes the segment $xy$, we use the notation $\alpha(e) = \alpha(x,y)$.

\begin{theorem}
	Let $S$ be a finite set of points in convex position in the plane.  If $|S|$ is odd, then there exists a Hamiltonian cycle $G$ with vertex set $S$ such that 
	\[
	\bigcap_{e \in E(G)} D(e) \neq \emptyset.
	\]
\end{theorem}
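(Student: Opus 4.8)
Write $|S|=2k+1$ and label the points $v_0,\dots,v_{2k}$ in cyclic order around $\partial\conv(S)$, indices read modulo $2k+1$. The plan is to take for $G$ the Hamiltonian cycle $v_0\to v_k\to v_{2k}\to v_{3k}\to\cdots$ obtained by repeatedly stepping by $k$: since $\gcd(k,2k+1)=1$ it visits every vertex exactly once, and its edge set is precisely $\{\{v_i,v_{i+k}\}:i\}$, the family of nearly diametral ``long'' diagonals of the polygon. So the theorem reduces to proving
\[
\bigcap_i D(v_i,v_{i+k})\neq\emptyset .
\]
By the perturbation remark of Section~\ref{sec:notation} I may assume $S$ is in general position. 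The working dictionary is that $z\in D(x,y)$ iff $\angle xzy\ge\pi/2$ iff $\langle z-x,\,z-y\rangle\le 0$. Writing $m_i$ and $R_i$ for the midpoint and the half-length of the segment $v_iv_{i+k}$, the function $g_i(z):=\langle z-v_i,\,z-v_{i+k}\rangle=|z-m_i|^2-R_i^2$ is strictly convex, so $z\mapsto\max_i g_i(z)$ has a unique minimizer $z^\ast$, of value $\mu$. If $\mu\le 0$ then $z^\ast$ lies in every disk and we are finished, so suppose $\mu>0$. By Helly's theorem it would be enough to show that every three of the disks meet, and the planar Chebyshev-center optimality condition concentrates the failure into a set $A$ of at most three ``active'' diagonals: $z^\ast\in\conv\{m_i:i\in A\}$ and $|z^\ast-m_i|=\sqrt{R_i^2+\mu}>R_i$ for each $i\in A$.

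The cases $|A|\le 2$ are immediate. If $|A|=1$ then $g_i(z^\ast)=-R_i^2<0$, contradicting $\mu>0$. If $A=\{a,b\}$ then $z^\ast$ lies on the open segment $(m_a,m_b)$, so that $|m_a-m_b|=\sqrt{R_a^2+\mu}+\sqrt{R_b^2+\mu}>R_a+R_b$; on the other hand a short cyclic-order check shows that any two long diagonals of a convex polygon either cross or share an endpoint, so they contain a common point $x$, and then $|m_a-m_b|\le|m_a-x|+|x-m_b|\le R_a+R_b$, a contradiction.

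The case $|A|=3$ carries the real content: three long diagonals $d_a,d_b,d_c$ whose disks have empty common intersection. Discarding the polygon vertices not incident to these three diagonals leaves at most six points in convex position, the principal case being a convex hexagon $w_0\cdots w_5$ whose diagonals $d_a,d_b,d_c$ are its three main diagonals $w_0w_3$, $w_1w_4$, $w_2w_5$ (the cases with shared endpoints involve at most five points and are easier). So everything comes down to the clean, self-contained statement that \emph{the three disks erected on the main diagonals of a convex hexagon have a common point}. I would prove this base case by hand: the main diagonals cross pairwise in an inner triangle $pqr$, each vertex of which already lies in two of the three disks, and convexity enters through collinearities such as ``$p$, $m_a$, $r$ all lie on the line $w_0w_3$''---this is precisely what excludes the ``three mutually distant disks'' configuration that would otherwise be intersection-free---after which a short inscribed-angle argument produces a point common to all three. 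Settling this bounded base case, namely choosing the right common point and extracting the exact use of convexity, is the step I expect to be the main obstacle; the remainder is routine, together with the limiting argument of Section~\ref{sec:notation} to remove the general-position hypothesis.
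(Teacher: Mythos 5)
Your choice of cycle is exactly the paper's: the edges are the long diagonals $\{v_i,v_{i+k}\}$, and since $\gcd(k,2k+1)=1$ they form a single Hamiltonian cycle. Your reduction to triples of disks is also essentially the paper's (the paper simply invokes Helly's theorem for the family of disks; your Chebyshev-center minimax setup with the active set $A$ and Carath\'eodory is a correct but roundabout re-derivation of the same reduction, and your $|A|\le 2$ cases are fine, using the correct observation that any two long diagonals cross or share an endpoint).

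The problem is that the case $|A|=3$ --- which you yourself identify as carrying ``the real content'' and as ``the step I expect to be the main obstacle'' --- is exactly where the entire difficulty of the theorem lives, and you do not prove it. Observing that each vertex of the inner triangle $pqr$ lies in two of the three disks does not produce a point in all three, and ``a short inscribed-angle argument'' is a placeholder, not an argument. So as written the proof is incomplete at its only nontrivial step. The missing ingredient is short but it is a genuine idea: since the three active diagonals pairwise intersect, they contain the three sides $s_1,s_2,s_3$ of a triangle (your $pqr$), and for any triangle the foot $H$ of the altitude dropped from the vertex with the largest angle lies on the opposite side; then $H$ lies on one side (hence in its diameter disk) and sees each of the other two sides at a right angle (hence lies on the boundary of their diameter disks), so $H\in D(s_1)\cap D(s_2)\cap D(s_3)$. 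Since $D(s_i)\subset D(d_i)$ when $s_i\subset d_i$, the three diagonal disks meet, contradicting $\mu>0$. With that lemma inserted, your argument closes; without it, the proof does not establish the theorem.
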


\begin{proof}
	Let $|S| = 2n+1$. Order the points $x_1, x_2, \ldots, x_{2n+1}$ clockwise along the the boundary of the convex hull of $S$.  The Hamiltonian cycle is formed by making $x_i$ adjacent to $x_{i+n}$ and $x_{i+n+1}$, where the indices are considered modulo $2n+1$. By construction, any two of the segments intersect, so any three segments contain the sides of a triangle.  If $e_1, e_2, e_3$ are three edges of $G$, let $s_1, s_2, s_3$ be the sides of the triangle so that $s_i \subset e_i$ for each $i$.
	
	Notice that $D(s_1) \cap D(s_2) \cap D(s_3) \neq \emptyset$ since the foot of the height from the largest angle is contained in the opposite side, and therefore it is in each disk.  Since $D(s_i) \subset D(e_i)$, we know that $D(e_1) \cap D(e_2) \cap D(e_3) \neq \emptyset$.  Finally, by Helly's theorem we obtain that $G$ is a Tverberg graph.
\end{proof}

The convex position scenario shows why the case when $|S|$ is odd behaves differently from the case when $|S|$ is even.  The proof above works for $\alpha$-lenses with $\alpha = 2\pi / 3$.  This is because for a triangle $xyz$ in the plane, either one of the angles is at least $2\pi/3$, or the Fermat-Toricelli point lies inside the triangle.  The Fermat-Toricelli point $p$ is the point that minimizes the sum of the distance to the vertices, and has the property that the angles $\angle xpy, \angle ypz, \angle zpx$ are all equal to $2\pi/3$.  This implies that the alpha-lenses $\alpha(x,y), \alpha(y,z), \alpha(x,z)$ intersect for $\alpha = 2\pi/3$.

For $|S|$ even, one cannot hope for such an extension to $\alpha$-lenses.  If $S$ is the set of vertices of a square, for any Hamiltonian cycle $G$, we have $\bigcap_{e\in E(G)}\alpha (e) = \emptyset$ for $\alpha> \pi/2$.  The extension to $\alpha$-lenses cannot does not hold for odd sets of points in general.  If $S$ is the set of vertices of a square and its center, then for any $\alpha > \pi/2$ the $\alpha$-lenses induced by any Hamiltonian cycles do not all intersect.

The general case of Theorem \ref{thm-odd}, when $S$ is not in convex position, is not so simple.  We do not have a canonical way to order the points and make a Hamiltonian cycle.  We circumvent this problem by creating Hamiltonian cycles ``around a point $p$''.  We choose a point $p$ as a candidate for the intersection of the disks.  Notice that for any two points $x,y \in S$, we have
\[
p \in D(x,y) \iff \angle xpy \ge \pi/2.
\]

We assume that our set of points $S$ is in general position.  For a point $p \in \rr^2$, we construct a Hamiltonian cycle for $p$ as follows:

\begin{itemize}
	\item If $p \not\in S$.  Consider $C(p)$ be the circle of radius $1$ around $p$.  We project radially every point of $S$ onto $C(p)$.  We label them $x_1, \ldots, x_{2n+1}$ clockwise around $C(p)$.  We form our Hamiltonian cycle by connecting $x_i$ with $x_{i+n}$ and $x_{i+n+1}$.  The cycle does not depend on the choice of $x_1$.  We call this a Hamiltonian cycle of type I.  See Figure \ref{fig:type1}.
	\item If $p \in S$.  Consider $C(p)$ the circle of radius $1$ around $p$.  We choose a point $p^* \in C(p)$ to represent $p$.  We project radially every other point of $S$ onto $C(p)$.  We label the points $x_1, \ldots, x_{2n+1}$ clockwise around $C(p)$ so that $p^* = x_{2n+1}$.  We form the Hamiltonian cycle by connecting $x_i$ with $x_{i+n}$ and $x_{i+n+1}$.  The cycle only depends on the choice of $p^*$. We call this a Hamiltonian cycle of type II.  See Figure \ref{fig:type2}.
\end{itemize}

\begin{figure}
	\centerline{\includegraphics{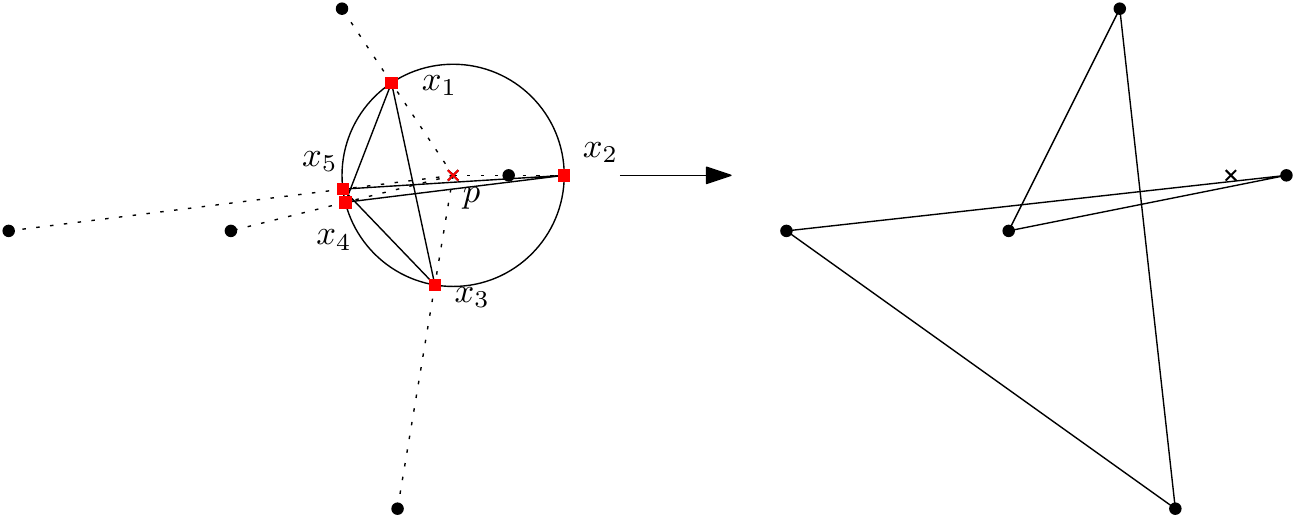}}
	\caption{A type I Hamiltonian cycle by five points from a center $p \not\in S$}
	\label{fig:type1}
\end{figure}

\begin{figure}
	\centerline{\includegraphics{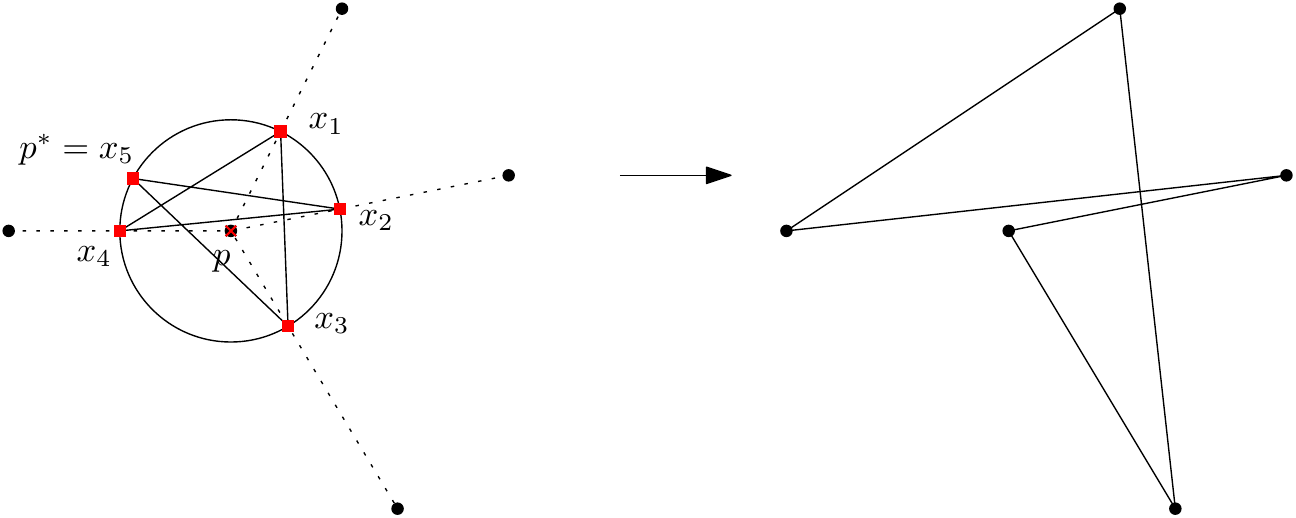}}
	\caption{A type II Hamiltonian cycle by five points from a center $p \in S$.  The choice of representative $p^* \in C(p)$ is important.  }
	\label{fig:type2}
\end{figure}

Notice that $p \in \bigcap_{e \in E(G)}D(e)$ where $E(G)$ is the set of edges of the Hamiltonian cycle of Type I if and only if $\angle x_i p x_{i+n} \ge \pi /2$ for each $i\in [2n+1]$.  For Hamiltonian cycle of type II, we only need to check $\angle x_i p x_{i+n} \ge \pi /2$ for $i \in [2n] \setminus \{n+1\}$, since the disks from edges ending at $p$ contain $p$.  Even though different points may induce the same cycles, we are checking if $p$ is in the disks induced by all edges of the cycle.  In Figure \ref{fig:type1} the answer would be negative since $p \not\in D(x_3,x_5)$, and in Figure \ref{fig:type2} the answer would be positive.

Given a point $p \in \rr^2 \setminus S$ we define $\ell(p)$ as the number of indices $i\in [2n+1]$ such that the angle $\angle x_i p x_{i+n}$ is strictly smaller than $\pi /2$ for the type I Hamiltonian cycle around $p$.  For a point $p \in S$, we define $\ell(p)$ as the smallest number of indices $i \in [2n] \setminus\{n+1\}$ such that the angle $\angle x_i p x_{i+n}$ is strictly smaller than $\pi/2$ for all Hamiltonian cycles of type II around $p$.  The parameter $\ell(p)$ is the number of disks that do not contain $p$.  If $\ell(p) = 0$ for some $p$, then we have a Hamiltonian cycle as we wanted.  For any $k \in \{0,1, \ldots, 2n+1\}$, the set of points $p$ such that $\ell (p) \le k$ is a closed set, which can be checked by sequences.  For $k < 2n+1$, it is contained in the union of all disks spanned by pairs of  $S$, and is therefore a compact set.

Now, for a value $k \in \{0,1,\ldots, 2n+1\}$ we consider $C_k \subset \rr^2$ as the region of the plane defined by
\[
C_k = \{p \in \rr^2 : \ell(p) \le k \}.
\]

Let $k$ be the smallest value such that $C_k \neq \emptyset$.  We assume $k \ge 1$, and look for a contradiction.  In this case, $C_k$ is compact.

For a point $p \in C_k \setminus S$ we define a function $f(p)$ as the sum of the $k$ angles $\angle x_i p x_{i+n}$ which are strictly smaller than $\pi / 2$.  If $p \in C_k \cap S$, we take the largest such sum among all type II Hamiltonian cycles around $p$, and don't consider the values $i=n+1, i=2n+1$ in the sum.

\begin{claim}\label{claim:odd1}
	The function $f$ attains a maximum value in $C_k$.
\end{claim}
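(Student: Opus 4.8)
The plan is to show that $f$ is upper semicontinuous on the compact set $C_k$, from which the existence of a maximum follows immediately by the extreme value theorem for upper semicontinuous functions on compacta. Since $C_k$ is already known to be compact (as $k \geq 1 \leq 2n+1$ and $C_k$ lies inside a union of finitely many disks), the whole burden is the semicontinuity.

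First I would unwind the definition of $f$ in a way that makes the dependence on $p$ tractable. For $p \notin S$, the type I cycle around $p$ depends only on the cyclic order of the radial projections of $S$ onto $C(p)$; this cyclic order is locally constant away from a codimension-one ``bad set'' where two points of $S$ become radially aligned with $p$ (i.e. $p$ lies on a line through two points of $S$), and the general position assumptions plus a limiting argument let us restrict attention suitably. On each region where the cyclic order is fixed, the $2n+1$ angle functions $p \mapsto \angle x_i p x_{i+n}$ are continuous (indeed real-analytic) functions of $p$, so $f(p)$ is locally a sum of the $k$ smallest among $2n+1$ fixed continuous functions that happen to be $< \pi/2$ — and I would argue this is exactly: the sum of those among the $2n+1$ values that are strictly below $\pi/2$, which has size exactly $k$ since $p \in C_k$ and $k$ is minimal (so $\ell(p) = k$). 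The key point is that $f(p)$ equals $\sum_{i \in T(p)} \angle x_i p x_{i+n}$ where $T(p)$ is the set of indices with angle $<\pi/2$, and $|T(p)| = k$.

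Next I would handle the two sources of potential discontinuity. The first is the change in combinatorial type (cyclic order, or choice of $p^* \in C(p)$ when $p \in S$): here the definition of $\ell(p)$ and $f(p)$ as a minimum (resp. maximum) over all admissible type II cycles is precisely what makes the construction robust — taking the best representative can only help, and one checks that along a convergent sequence $p_j \to p$ the cyclic orders stabilize along a subsequence to a cyclic order that is among those admissible at the limit $p$. The second is the jump in membership ``$\angle x_i p x_{i+n} < \pi/2$'' versus ``$= \pi/2$'': if along $p_j \to p$ some angle $\angle x_i p_j x_{i+n}$ sits below $\pi/2$ but converges up to exactly $\pi/2$, the index $i$ is counted in $f(p_j)$ but may be dropped at $p$; however the contribution it adds, being close to $\pi/2$, is at most $\pi/2$ in the limit, while at $p$ some other index not previously counted must take its place (to keep the count at $k$, using minimality of $k$), contributing at least... — rather than chase this, the clean statement is: $f(p_j) \leq k\pi/2$ always, and $\limsup_j f(p_j) \leq f(p)$ because any index contributing to $f(p_j)$ for infinitely many $j$ either contributes to $f(p)$ with the same limiting value, or has limiting angle exactly $\pi/2$ and is replaced in $f(p)$ by the true set $T(p)$ of size $k$ whose sum is at least as large once we account for all indices with limiting angle $\leq \pi/2$. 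This gives $\limsup_{p_j \to p} f(p_j) \le f(p)$, i.e. upper semicontinuity.

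The main obstacle I anticipate is precisely this bookkeeping at the boundary between the regions $C_k$ and where angles equal $\pi/2$: one must verify that passing to the limit does not \emph{increase} $f$, and that the index set $T(p)$ at the limit still has size exactly $k$ (not smaller), which is where the minimality of $k$ — the hypothesis that $C_{k-1} = \emptyset$ — is essential. Everything else (compactness of $C_k$, continuity of each angle function on a region of fixed combinatorial type, finiteness of the combinatorial types) is routine. I would therefore organize the write-up as: (1) recall $C_k$ compact; (2) observe $f$ is, near each point, a maximum over finitely many functions each of which is a sum of $k$ continuous angle functions, hence its $\limsup$ along sequences is controlled; (3) conclude upper semicontinuity and invoke the extreme value theorem.
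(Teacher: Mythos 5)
Your approach --- upper semicontinuity of $f$ on the compact set $C_k$ plus the extreme value theorem for u.s.c.\ functions --- is essentially the paper's argument repackaged: the paper proves the same one-sided estimate $\limsup_{p_i\to p} f(p_i)\le f(p)$ at points $p\in S$ (via a stabilizing representative $q^*\in C(p)$ obtained from the directions $p_i-p$) and phrases the conclusion as compactness of the superlevel set $\{f\ge f(p_0)\}$, which is equivalent to what you do. The one substantive remark is that the bookkeeping you flag at the $\pi/2$ boundary is vacuous rather than delicate, and the justification you offer for it (``replaced by the true set $T(p)$ \dots whose sum is at least as large'') is not actually an argument. The clean observation is that on $C_k\setminus S$, within a region of fixed combinatorial type, the counted index set $T(p)$ is locally constant: each $i\in T(p)$ satisfies the strict inequality in a neighborhood, so $T(p)\subseteq T(p_j)$ for nearby $p_j\in C_k$, and since $|T(p_j)|=|T(p)|=k$ by minimality of $k$ no index can be added or dropped. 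Hence $f$ is genuinely continuous on $C_k\setminus S$, and upper semicontinuity only needs to be checked at the finitely many points of $S$, exactly where the max-over-representatives definition supplies it.
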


\begin{proof}
	Notice that $f$ is continuous in $C_k \setminus S$.  Let $M = \max \{f(p) : p \in S \cap C_k\}$.  If $M$ is not the maximum, then there exists a point $p_0 \in C_k$ such that $f(p_0) > M$.  Let $R = \{p \in C_k : f(p) \ge f(p_0)\}$.  Let us show that no point of $S$ is in the closure of $R$.  If that's not the case, there is a sequence $p_1, p_2, \ldots$ in $R$ that converges to a point $p$ in $S$.  The sequence of points of the form
	\[
	q_i = p + \frac{1}{||p-p_i||}(p-p_i)
	\]
	lies in $C(p)$.  Since $C(p)$ is compact, by taking subsequences we can assume that $q_i$ converges to some point $q^*$.  The type I Hamiltonian cycle induced by $p_i$ is eventually equal to the type II Hamiltonian cycle induced by $p$ using the reprresentative $q^*$ for $p$.  Therefore, the limit of $f(p_i)$ is equal to the function evaluated at $p$ with representative $q^*$.  The value of $f(p)$ is greater than or equal to this value.  This contradicts the fact that $f(p_i) \ge f(p_0)$ for each $i \ge 1$.
	
	Since $S$ is not in the closure of $R$, $f$ is continuous in $C_k \setminus S$, and $C_k$ is compact, then $R$ is compact.  The function $f$ must have a maximum value in $R$, which is a maximum value over all of $C_k$.
\end{proof}

The following claim will give us the contradiction we seek, showing that $k=0$.

\begin{claim}\label{claim:ood2}
	If $k \ge 1$, the function $f$ cannot attain a maximum value in $C_k$.
\end{claim}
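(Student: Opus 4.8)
We argue by contradiction. Suppose $f$ attains its maximum at a point $p$ with $\ell(p) = k \ge 1$. The strategy is to perturb $p$ in a well-chosen direction and show that either the value of $f$ strictly increases (contradicting maximality) or the number of "bad" angles drops below $k$ (contradicting the minimality of $k$). The key local observation is the following: if we move $p$ along a straight line with unit velocity $v$, then for a fixed pair $x, y \in S$, the rate of change of the angle $\angle xpy$ has a clean geometric description. Writing $\theta(p) = \angle xpy$, the derivative $\frac{d}{dt}\theta(p + tv)\big|_{t=0}$ depends on $v$ and on the two unit vectors pointing from $p$ toward $x$ and toward $y$; in fact the angle $\angle xpy$ equals $\pi$ minus the sum of the two base angles of triangle $xpy$, and moving $p$ toward the interior of the segment $xy$ increases $\angle xpy$ while moving away decreases it. More precisely, $\nabla_p(\angle xpy)$ points roughly "into" the lens region, i.e. toward the arc through $x$ and $y$ of larger inscribed angle.

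**Main steps.** First I would set up the first-variation computation: for $p \notin S$ and the type I cycle, the $k$ bad indices are those $i$ with $\angle x_i p x_{i+n} < \pi/2$, and $f(p)$ is the sum of those $k$ angles. By general position, the strict inequalities are stable, so near $p$ the same $k$ pairs stay bad (as long as we stay in $C_k$), and the same Hamiltonian cycle is induced (the cyclic order of the radial projections is locally constant, away from $S$). Hence $f$ is differentiable at $p$, and $f(p) = \sum_{i \in B} \angle x_i p x_{i+n}$ where $B$ is the set of $k$ bad indices. Second, I would show $\nabla f(p) \ne 0$ unless some geometric degeneracy occurs — and then moving in the direction $\nabla f(p)$ increases $f$. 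The only way $f$ can have an interior maximum is if $p$ lies on the boundary of $C_k$, i.e. some angle $\angle x_j p x_{j+n}$ (for $j \notin B$) equals exactly $\pi/2$, so that moving in the ascent direction would make $\ell$ jump to $k+1$. Third — and this is the crux — I would analyze that boundary case. At such a $p$, there is a "good" pair $(x_j, x_{j+n})$ with $\angle x_j p x_{j+n} = \pi/2$ exactly, meaning $p$ lies on the circle $\partial D(x_j, x_{j+n})$. One then needs to find a direction $v$ that simultaneously increases $f$ (the sum over $B$) and does not push the critical angle below $\pi/2$ — or, alternatively, a direction that makes one of the bad angles in $B$ reach $\pi/2$, thereby producing a point in $C_{k-1}$ and contradicting minimality of $k$. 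The case $p \in S$ is handled by the same computation using the type II description, where $p^*$ replaces one projected point and we work with the $C(p)$-sphere degree of freedom as in Claim~\ref{claim:odd1}.

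**The main obstacle.** The delicate point is the boundary analysis in the third step: ruling out the configuration where $p$ sits at a point where moving in every ascent direction for $f$ immediately creates a new bad angle. I expect the resolution to hinge on a dimension/counting argument — the "good" angle forced to equal $\pi/2$ gives one linear constraint, and the ascent cone of $f$ is an open halfplane (or all of $\rr^2$ if $\nabla f = 0$), so generically there is room to move; the general position hypotheses (no tangency of the relevant circles, no three circle-boundaries concurrent) are exactly what rules out the bad coincidences. One must also handle the possibility $\nabla f(p) = 0$: in that case a second-order analysis, or a direct geometric argument showing that the $k$ bad lenses cannot have a common "balance point" away from $S$ when $k \ge 1$, should force a contradiction — intuitively, if $p$ is a critical point of $\sum_{i \in B} \angle x_i p x_{i+n}$ then the gradients of the individual angles cancel, but each such gradient points into its own lens, and $k$ such vectors summing to zero while all $k$ angles are below $\pi/2$ is geometrically impossible (one can, e.g., intersect with a halfplane through $p$ to derive a contradiction, mirroring Birch's argument). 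I would organize the writeup so that the gradient formula for $\angle xpy$ is stated once as a lemma, and then both the interior and boundary cases invoke it.
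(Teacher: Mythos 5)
Your overall strategy---perturb $p$ to increase $f$ or drop out of $C_k$---is the same as the paper's, but the two steps you flag as ``the crux'' and ``the main obstacle'' are exactly the ones you do not carry out, and the way you propose to carry them out would not work. The paper's key idea, which is missing from your proposal, is global rather than local: each ``short'' arc $x_ix_{i+n}$ on $C(p)$ (including those with angle exactly $\pi/2$) contains at least $n+1$ of the $2n+1$ projected points, so any two such arcs intersect, any three fail to cover $C(p)$, and by Helly's theorem (applied to the convex hulls of $p$ with these arcs, minus $p$) all of them share a point $x^*$. Moving $p$ toward $x^*$ then widens \emph{every} short arc simultaneously; this is what guarantees both that $f$ increases and that no angle currently equal to $\pi/2$ drops below it, i.e.\ that one stays in $C_k$. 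Your substitute for this in the $\nabla f=0$ case---``$k$ gradients each pointing into its own lens cannot sum to zero while all $k$ angles are below $\pi/2$''---is false as a purely local statement: three pairs subtending angles of $80^\circ$ at $p$, arranged symmetrically at $120^\circ$ apart, have cancelling gradients. What rules such configurations out is precisely the counting fact that each bad arc is long (contains $n+1$ projections), which comes from the structure of the Hamiltonian cycle and which your argument never invokes. Your boundary analysis (a good angle at exactly $\pi/2$ blocking every ascent direction) is likewise left as an expectation; the paper resolves it by folding the $=\pi/2$ arcs into the Helly argument so that the chosen direction increases them too.

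The second serious gap is the case $p\in S$, which you dismiss as ``the same computation.'' It is not: $f(p)$ there is a maximum over choices of representative $p^*$, the combinatorics of the cycle change discontinuously as one moves off $p$, and the paper spends more than half the proof on this case---re-choosing $p^*$ so that $-p^*$ lies in the common intersection of the short arcs (with a separate counting argument for the sets $A$ and $B$ of arcs, and a special treatment of $k=1$), then verifying $\angle x_n p\, p^* > \pi/2$ before moving $p$ in the direction $p - p^*$. None of this is visible in your outline. As written, the proposal is a plausible plan with the two essential ingredients absent, and one of its proposed patches (the gradient-cancellation impossibility) is incorrect.
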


\begin{proof}
	First, consider the case $p \not\in S$.  Consider the set of arcs $x_ix_{i+n}$ on $C(p)$ such that $\angle x_i p x_{i+n} \le \pi/2$.  Of the two circular arcs determined by $x_i, x_{i+n}$ we are considering the smaller one.  There are $k$ arcs which make an angle strictly smaller than $\pi/2$ and at most two that make an angle equal to $\pi/2$ due to the general position assumptions.
	
	If we include its extremes, each of these arcs contains at least $n+1$ points $x_i$, possibly more if the extremes $x_i, x_{i+n}$ coincide other projections.  If two of these arcs were disjoint, there would be at least $2n+2$ points in $S$, a contradiction.  Therefore, any two of these arcs intersect.  For any three of these arcs, their union cannot cover $C(p)$, or at least one of them would make an angle of at least $2\pi/3$.  Therefore, any three of these arcs intersect.
	
	Now, for each of these arcs $x_ix_{i+n}$, we define the convex set $K_i$ formed by the convex hull of $p$ and the circular arc $x_ix_{i+n}$.  Since $p$ is an extreme point of $K_i$, the set $K_i\setminus \{p\}$ is convex.  By Helly's theorem, since every three sets of the sets of the form $K_i \setminus\{p\}$ intersect, there is a point $y^*$ common to all of them.  The radial projection $x^*$ of $y^*$ onto $C(p)$ is in all the arcs $x_ix_{i+n}$ we were considering.
	
	If we move $p$ in the direction $x^*-p$ a small enough distance, we remain in $C_k$ and the value of $f(p)$ increases.  Therefore, $p$ was not a maximum of $f$.
	
	Now, consider the case $p \in S$.  We assume without loss of generality that $p$ is the origin, so antipodal points in $C(p)$ are negative of each other.  Let $p^*$ be a representative of $p$ in $C(p)$ that realizes $f(p)$.  
		
	No arcs $x_i x_j$ for $i,j \neq 2n+1$ have an angle equal to $\pi/2$ due to the general position argument.  As before, the arcs $x_ix_{i+n}$ which form angles smaller than $\pi/2$ all have a nonempty intersection $Q$.  We first show that we can assume $-p^* \in Q$.
	
	If $-p^*$ was not a point in the intersection of all the small arcs, let $q \in Q$ be an arbitrary point in that intersection.  We replace $p^*$ by $-q$ and reconstruct the Hamiltonian cycle.  Denote by $\tilde{x}_1, \ldots, \tilde{x}_{2n}$ the new numbering of the points in $S \setminus p$.  If there is no new arc $\tilde{x}_i \tilde{x}_{i+n}$ that contains $-q$ and whose angle is smaller than $\pi/2$, all the arcs we would consider now are either arcs of the form $x_ix_{i+n}$ that did not contain $p^*$ or a widening of one of such arcs.
	
	Therefore, we may assume that there is an arc $\tilde{x}_i\tilde{x}_{i+n}$ that contains $-q$ and has angle smaller than $\pi/2$.  Let $A$ be the set of all such arcs.  Let $B$ be the set of all arcs $x_jx_{j+n}$ in the first Hamiltonian cycle that had angle smaller than $\pi/2$.  We know that $(\cup A) \cap (\cup B) = \emptyset$, because any arc in $B$ contains $q$, any arc in $A$ contains $-q$, and both had angles smaller than $\pi/2$.  The arc $\cup A$ contains at least $n+|A|-1$ projections of points of $S$ (not counting $q$).  The arc $B$ contains at least $n+|B|-1$ projections of points of $S$.  In total this gives us at least $2n +|A|+|B|-2$. Since there is a total of $2n$ projections of points of $S$ onto $C(p)$, this means that $|A|=|B| = 1$.  
	
	Therefore, for $k \ge 2$ we can replace $p^*$ by $-q$ without decreasing $f(p)$.  If $k=1$, the (single) arc $A$ as defined above contains $n$ of the projections $x_i$ and the arc $B$ contains another $n$ (and therefore must contain $p^*$, or it would need to contain at least $n+1$ of the projections).  There are no points of the form $x_j$ outside $A \cup B$.  The set $C(p) \setminus (A \cup B)$ is the union of two arc whose angles sum to more than $\pi$, so one of them must have angle greater than $\pi/2$.  Let $p^{**}$ be a point in this arc.  We replace $p^*$ by $p^{**}$.  When we make the Hamiltonian cycle with this new representative, the arc $B$ either widens or gets removed.  In the first case, $f(p)$ increases, and in the second we would have $p \in C_0$.  Notice that no new short arc containing $p^{**}$ was created because the two projection points $x_i, x_j$ next to $p^{**}$ form an angle greater than $\pi/2$.
	
	This means we can always assume that $-p^*$ is in the intersection of all the arcs of the form $x_{i}x_{i+n}$ with angle smaller than $\pi/2$.  Finally, let us show that $\angle x_npp^* > \pi/2$ and $\angle p^*p x_{n+1} > \pi/2$.   We show the bound on the first angle, and the second is analogous.  If $\angle x_n p p^* \le \pi/2$, notice that the arc $A = x_n p^*$ contains at least $n$ points of the form $x_i$.  Take an arc $B$ of the form $x_i x_{i+n}$ with angle smaller than $\pi/2$.  This arc contains $-p^*$ and therefore cannot intersect $A$.  Moreover, $B$ must contain at least $n+1$ points of the form $x_i$.  Then, $A \cup B$ contain at least $2n+1$ such points, but there are only $2n$ of them.
	
	With these conditions, we can now move $p$ a small distance in the direction $p-p^*$, reaching a new point $p'$.  Since $p\in S$, when we represent it in $C(p')$ it will be the point in direction of $p^*$.  This means that the canonical Hamiltonian cycle of type I will be the same that we had with $p$ when the representative was $p^*$.  Since $-p^*$ was in the intersection of all the short arcs, they all widen when we reach $p'$.  Therefore, $f(p)$ was not maximal.
\end{proof}

\section{Results in high dimensions and remarks}\label{sec:high-dimensions}

We show how to use Tverberg's theorem to prove Theorem \ref{thm-tverbergapplied}.

\begin{proof}
Let $S$ be a set of at least $(r-1)(d+1)+1$ points in $\rr^d$.  By Tverberg's theorem, there exists a partition of the points into $r$ sets $A_1, \ldots, A_r$ whose convex hulls intersect.  Let $p$ a point the point of intersection of $\conv A_j$ of $j=1,\ldots, d$.

Given a point $q \in S$ and a value $1\le j \le r$, consider the  half-space 
\[
H^+ = \{x \in \rr^d: \langle x, q-p\rangle \ge \langle p, q-p\rangle \}.
\]
By definition, $p \in H^+$.  Therefore, since $p \in \conv A_j$, there is an element $q_j \in A_j \cap H^+$.  Notice that $p \in D(q,q_j)$.  We construct a graph $G$ with vertices in $S$ and make $\{q,q_j\}$ one of the edges.  We repeat this process for each $q \in S$ and each $j =1,\ldots, r$.  The graph we constructed satisfies the properties of the problem.
\end{proof}

A general unsolved problem is the following.

\begin{problem}
    Let $S$ be a set of points in $\rr^d$.  Determine if there exists a Hamiltonian cycle with vertices on $S$ that is a Tverberg graph for $S$.
\end{problem}

We suspect that the answer should be positive in the plane, regardless of the parity of $S$.  In high dimension we don't even know if, for even $|S|$, there exists a perfect matching that is a Tverberg graph for $S$.

As an additional bit of evidence that the answer should be positive in the plane, consider the case of four points in $\rr^2$.  If the convex hull of the points is a triangle with vertices $x,y,z$, notice that every point inside $\triangle xyz$ is covered twice by the sets $D(x,y), D(y,z), D(x,z)$.  Assume without loss of generality that the point $w$ is covered by $D(x,y), D(x,z)$.  Then, the Hamiltoninan cycle $(w,y,x,z,w)$ is a Tverberg graph (the point $w$ is in the intersection of all disks).  

If the convex hull of the four points is a convex quadrilateral with intersecting diagonals $xy, wz$, let $p$ be the point of intersection of the diagonals.  One of the two angles $\angle xpw$ and $\angle wpy$ must be greater than or equal to $\pi/2$.  If, without loss of generality, $\angle xpw \ge \pi/2$, then the Hamiltonian cyclee $(x,w,z,y,x)$ is a Tverberg graph (the point $p$ is in the intersection of all disks).


\begin{bibdiv}
\begin{biblist}

\bib{barany1987extension}{article}{
      author={B\'ar\'any, Imre},
       title={An extension of the erd{\H{o}}s-szekeres theorem on large
  angles},
        date={1987},
     journal={Combinatorica},
      volume={7},
      number={2},
       pages={161\ndash 169},
}

\bib{Barany:2016vx}{article}{
      author={B{\'a}r{\'a}ny, Imre},
      author={Blagojevi{\'c}, Pavle V.~M.},
      author={Ziegler, G{\"u}nter~M.},
       title={{Tverberg's Theorem at 50: Extensions and Counterexamples}},
        date={2016},
     journal={Notices of the American Mathematical Society},
      volume={63},
       pages={732\ndash 739},
}

\bib{bereg2019maximum}{article}{
      author={Bereg, Sergey},
      author={Chac{\'o}n-Rivera, Oscar},
      author={Flores-Pe{\~n}aloza, David},
      author={Huemer, Clemens},
      author={P{\'e}rez-Lantero, Pablo},
      author={Seara, Carlos},
       title={On maximum-sum matchings of points},
        date={2019},
     journal={arXiv preprint arXiv:1911.10610},
      volume={[cs.CG]},
}

\bib{Blagojevic:2014js}{article}{
      author={Blagojevi{\'c}, Pavle V.~M.},
      author={Frick, Florian},
      author={Ziegler, G{\"u}nter~M.},
       title={{Tverberg plus constraints}},
        date={2014},
     journal={Bulletin of the London Mathematical Society},
      volume={46},
      number={5},
       pages={953\ndash 967},
}

\bib{Birch:1959ii}{article}{
      author={Birch, Bryan~John},
       title={{On 3N points in a plane}},
        date={1959},
     journal={Mathematical Proceedings of the Cambridge Philosophical Society},
      volume={55},
      number={04},
       pages={289\ndash 293},
}

\bib{Barany:1987ef}{article}{
      author={B{\'a}r{\'a}ny, Imre},
      author={Lehel, Jeno},
       title={{Covering with Euclidean Boxes}},
        date={1987},
     journal={European Journal of Combinatorics},
      volume={8},
      number={2},
       pages={113\ndash 119},
}

\bib{Barany:1992tx}{article}{
      author={B{\'a}r{\'a}ny, Imre},
      author={Larman, David~G},
       title={{A Colored Version of Tverberg's Theorem}},
        date={1992},
     journal={J. London Math. Soc},
      volume={s2-45},
      number={2},
       pages={314\ndash 320},
}

\bib{Blagojevic:2011vh}{article}{
      author={Blagojevi{\'c}, Pavle V.~M.},
      author={Matschke, Benjamin},
      author={Ziegler, G{\"u}nter~M.},
       title={{Optimal bounds for a colorful Tverberg-Vrecica type problem}},
        date={2011},
     journal={Advances in Mathematics},
      volume={226},
      number={6},
       pages={5198\ndash 5215},
}

\bib{Blagojevic:2015wya}{article}{
      author={Blagojevi{\'c}, Pavle V.~M.},
      author={Matschke, Benjamin},
      author={Ziegler, G{\"u}nter~M.},
       title={{Optimal bounds for the colored Tverberg problem}},
        date={2015},
     journal={Journal of the European Mathematical Society},
      volume={17},
      number={4},
       pages={739\ndash 754},
}

\bib{Barany:2018fya}{article}{
      author={B{\'a}r{\'a}ny, Imre},
      author={Sober{\'o}n, Pablo},
       title={{Tverberg{\textquoteright}s theorem is 50 years old: A survey}},
        date={2018},
     journal={Bulletin of the American Mathematical Society},
      volume={55},
      number={4},
       pages={459\ndash 492},
}

\bib{DeLoera:2019jb}{article}{
      author={De~Loera, Jes{\'u}s~A.},
      author={Goaoc, Xavier},
      author={Meunier, Fr{\'e}d{\'e}ric},
      author={Mustafa, Nabil~H.},
       title={{The discrete yet ubiquitous theorems of Carath{\'e}odory, Helly,
  Sperner, Tucker, and Tverberg}},
        date={2019},
     journal={Bulletin of the American Mathematical Society},
      volume={56},
      number={3},
       pages={1\ndash 97},
}

\bib{Frick:2020uj}{article}{
      author={Frick, Florian},
      author={Sober{\'o}n, Pablo},
       title={{The topological Tverberg problem beyond prime powers}},
        date={2020},
     journal={arXiv preprint arXiv:2005.05251},
      volume={[math.CO]},
}

\bib{Huemer:2019ji}{article}{
      author={Huemer, Clemens},
      author={P{\'e}rez-Lantero, Pablo},
      author={Seara, Carlos},
      author={Silveira, Rodrigo~I},
       title={{Matching points with disks with a common intersection}},
        date={2019},
     journal={Discrete Mathematics},
      volume={342},
      number={7},
       pages={1885\ndash 1893},
}

\bib{Magazinov:2016ia}{article}{
      author={Magazinov, Alexander},
      author={Sober{\'o}n, Pablo},
       title={{Positive-fraction intersection results and variations of weak
  epsilon-nets}},
        date={2016},
     journal={Monatshefte f{\"u}r Mathematik},
      volume={183},
      number={1},
       pages={165\ndash 176},
}

\bib{Tverberg:1966tb}{article}{
      author={Tverberg, Helge},
       title={{A generalization of Radon{\textquoteright}s theorem}},
        date={1966},
     journal={J. London Math. Soc},
      volume={41},
      number={1},
       pages={123\ndash 128},
}

\end{biblist}
\end{bibdiv}

\end{document}